\documentclass[11pt,final]{article}%
\usepackage{ifdraft}
\usepackage{amsfonts}
\usepackage{amsmath}
\usepackage{amssymb}
\usepackage{amsxtra}
\usepackage[amsmath,hyperref,thmmarks]{ntheorem}
\usepackage{stix2}
\usepackage{makeidx}
\usepackage{graphicx}
\usepackage{cprotect}
\usepackage{natbib}
\usepackage[colorlinks=true,bookmarksnumbered=true,pdfpagemode=None,final]%
{hyperref}%
\setcounter{MaxMatrixCols}{30}
\providecommand{\U}[1]{\protect\rule{.1in}{.1in}}
\ifdraft{}{\usepackage{version}}
\ifdraft{}{\excludeversion{nt}}

\newcommand{\df}{\smash{\lower.12em\hbox{\textup{\tiny def}}}}

\def\1{{1\mkern-7mu1}}

\overfullrule=0pt

\usepackage[overload]{textcase}

\usepackage{xcolor}
\definecolor{bblue}{rgb}{0.0, 0.0, 0.6}

\pagestyle{headings}

\usepackage{array}

\usepackage{microtype}

\usepackage{tikz}
\usepackage{tikz-cd}
\usetikzlibrary{matrix,arrows,positioning,decorations.pathmorphing}
\usetikzlibrary{decorations.markings,shapes.geometric}
\tikzset{commutative diagrams/column sep/Huge/.initial=24ex}
\tikzset{commutative diagrams/column sep/tiny+/.initial=1.7ex}
\tikzcdset{arrow style=math font}

\usepackage{enumitem}
\setitemize[1]{label=$\diamond$\hspace{0.07in}}
\setlist{nolistsep}

\usepackage{url}
\usepackage{verbatim}

\usepackage[notcite,notref]{showkeys}

\usepackage{mathtools}

\usepackage{titletoc,titlesec}
\contentsmargin{2.0em}
\dottedcontents{section}[2.3em]{}{1.8em}{1pc}
\setcounter{tocdepth}{1}
\usepackage[nottoc]{tocbibind}

\titleformat*{\section}{\Large\itshape}
\titleformat*{\subsubsection}{\scshape}
\titleformat*{\paragraph}{\itshape}

\setcounter{secnumdepth}{1}

\usepackage{natbib}
\let\cite\citealt

\hypersetup{linkcolor=bblue,anchorcolor=bblue,citecolor=bblue,filecolor=bblue,urlcolor=bblue}
\hypersetup{pdftitle={Hodge classes on abelian varieties},pdfauthor={J.S. Milne}}
\hypersetup{pdfsubject={Hodge classes},pdfkeywords={abelian varieties}}

\usepackage[textwidth=5.5in,textheight=9.1in,centering,a4paper]{geometry}


\newcommand{\bcomment}{}
\newcommand{\bfootnotesize}{\begin{footnotesize}}\newcommand\efootnotesize{\end{footnotesize}}
\newcommand{\bquote}{\begin{quote}}\newcommand\equote{\end{quote}}
\newcommand{\bsmall}{\begin{small}}\newcommand\esmall{\end{small}}
\newcommand{\btable}{\begin{table}}\newcommand{\etable}{\end{table}}
\newcommand{\dstyle}{\displaystyle}
\newcommand{\edocument}{
\theoremnumbering{arabic}
\theoremheaderfont{\scshape}
\RequirePackage{latexsym}
\theorembodyfont{\slshape}
\theoremseparator{.}
\newtheorem{proposition}{Proposition}
\newtheorem{theorem}{Theorem}

\theorembodyfont{\upshape}
\newtheorem{plain}{}[section]
\newtheorem{remark}{Remark}
\theorembodyfont{\normalsize}
\theoremstyle{nonumberplain}
\newtheorem{proof}{Proof}
\theoremsymbol{\ensuremath{_\Box}}
\qedsymbol{\ensuremath{_\Box}}
\makeindex
\begin{document}

\title{Hodge classes on abelian varieties}
\author{J.S. Milne}
\maketitle

\begin{abstract}
We prove, following Deligne and Andr\'{e}, that the Hodge classes on abelian
varieties of CM-type can be expressed in terms of divisor classes and split
Weil classes, and we describe some consequences. In particular, we show that
 Grothendieck's standard conjecture of Lefschetz type implies the Hodge
conjecture for abelian varieties (Abdulali, Andr\'e, \ldots). No new results,
but the proofs are shorter.

\end{abstract}
\tableofcontents

\section{Review of abelian varieties of CM-type}

\begin{plain}
\label{r1}A complex abelian variety is said to be of CM-type if $\End^{0}(A)$
contains a CM-algebra\footnote{That is, a product of CM-fields.} $E$ such that
$H^{1}(A,\mathbb{Q}{})$ is free of rank $1$ as an $E$-module. Let
$S=\Hom(E,\mathbb{C}{})$, and let $H^{1}(A)=H^{1}(A,\mathbb{C}{})$. Then%
\[
H^{1}(A)\simeq H^{1}(A,\mathbb{Q}{})\otimes\mathbb{C}{}=\bigoplus_{s\in
S}H^{1}(A)_{s},\quad H^{1}(A)_{s}\overset{\df}{=}H^{1}(A)\otimes
_{E,s}\mathbb{C}{}\text{.}%
\]
Here $H^{1}(A)_{s}$ is the (one-dimensional) subspace of $H^{1}(A)$ on which
$E$ acts through $s$. We have%
\[
H^{1,0}(A)=\bigoplus_{s\in\Phi}H^{1}(A)_{s},\quad H^{0,1}(A)=\bigoplus
_{s\in\bar{\Phi}}H^{1}(A)_{s}\text{,}%
\]
where $\Phi$ is a CM-type on $E$, i.e., a subset of $S$ such that
$S=\Phi\sqcup\bar{\Phi}$. Every pair $(E,\Phi)$ consisting of a CM-algebra $E$
and a CM-type $\Phi$ on $E$ arises in this way from an abelian variety.
Sometimes we identify a CM-type with its characteristic function $\phi\colon
S\rightarrow\{0,1\}$.
\end{plain}

\begin{plain}
\label{r4}Let $A$ be a complex abelian variety of CM-type, and let $E$ be a
CM-subalgebra of $\End^{0}(A)$ such that $H^{1}(A,\mathbb{Q}{})$ is a free
$E$-module of rank $1$. Let $F$ be a Galois extension of $\mathbb{Q}$ in
$\mathbb{C}{}$ splitting\footnote{That is, such that $E\otimes_{\mathbb{Q}}F$
is isomorphic to a product of copies of $F$.} $E$, and let $S=\Hom(E,F)$. We
regard the CM-type $\Phi$ of $A$ as a subset of $S$. Let $H^{r}(A)=H^{r}%
(A,F)$. Then%
\[
H^{1}(A)\simeq H^{1}(A,\mathbb{Q}{})\otimes_{\mathbb{Q}{}}F=\bigoplus
\nolimits_{s\in S}H^{1}(A)_{s},\quad
\]
where $H^{1}(A)_{s}\overset{\df}{=}H^{1}(A,\mathbb{Q}{})\otimes_{E,s}F$ is the
(one-dimensional) $F$-subspace of $H^{1}(A)$ on which $E$ acts through $s$.

\begin{enumerate}
\item We have
\[
H^{r}(A)\simeq\bigwedge\nolimits_{F}^{r}H^{1}(A)=\bigoplus\nolimits_{\Delta
}H^{r}(A)_{\Delta}\quad\quad\text{(}F\text{-vector spaces),}%
\]
where $\Delta$ runs over the subsets of $S$ of size $|\Delta|=r$ and
$H^{r}(A)_{\Delta}\overset{\df}{=}\bigotimes_{s\in\Delta}H^{1}(A)_{s}$ is the
(one-dimensional) subspace on which $a\in E$ acts as $\prod\nolimits_{s\in
\Delta}s(a)$.

\item Let $H^{1,0}=\bigoplus\nolimits_{s\in\Phi}H^{1}(A)_{s}$ and
$H^{0,1}=\bigoplus\nolimits_{s\in\bar{\Phi}}H^{1}(A)_{s}$. Then
\[
H^{p,q}\overset{\df}{=}\bigwedge\nolimits^{p}H^{1,0}\otimes\bigwedge
\nolimits^{q}H^{0,1}=\bigoplus\nolimits_{\Delta}H^{p+q}(A)_{\Delta}\quad
\quad\text{(}F\text{-vector spaces),}%
\]
where $\Delta$ runs over the subsets of $S$ with $|\Delta\cap\Phi|=p$ and
$|\Delta\cap\bar{\Phi}|=q$.

\item (\cite{pohlmann1968}, Theorem 1.) Let $B^{p}=H^{2p}(A,\mathbb{Q}{})\cap
H^{p,p}$ ($\mathbb{Q}{}$-vector space of Hodge classes of degree $p$ on $A$).
Then
\[
B^{p}\otimes F=\bigoplus\nolimits_{\Delta}H^{2p}(A)_{\Delta}\text{,}%
\]
where $\Delta$ runs over the subsets of $S$ with%
\begin{equation}
|(t\circ\Delta)\cap\Phi|=p=|(t\circ\Delta)\cap\bar{\Phi}|\text{ for all }%
t\in\Gal(F/\mathbb{Q}{}). \label{eq2}%
\end{equation}

\end{enumerate}
\end{plain}

\section{Review of Weil classes}

\begin{plain}
\label{r2}Let $A$ be a complex abelian variety and $\nu$ a homomorphism from a
CM-field $E$ into $\End^{0}(A)$. The pair $(A,\nu)$ is said to be of Weil type
if $H^{1,0}(A)$ is a free $E\otimes_{\mathbb{Q}{}}\mathbb{C}{}$-module. In
this case, $d\overset{\df}{=}\dim_{E}H^{1}(A,\mathbb{Q}{})$ is even and the
subspace $W_{E}(A)\overset{\df}{=}\bigwedge\nolimits_{E}^{d}H^{1}%
(A,\mathbb{Q}{})$ of $H^{d}(A,\mathbb{Q}{})$ consists of Hodge classes
(\cite{deligne1982}, 4.4). When $E$ has degree $2$ over $\mathbb{Q}$, these
Hodge classes were studied by Weil (1977)\nocite{weil1977}, and for this
reason are called Weil classes. A polarization of $(A,\nu)$ is a polarization
$\lambda$ of $A$ whose Rosati involution stabilizes $\nu(E)$ and acts on it as
complex conjugation. The Riemann form of such a polarization can be written%
\[
(x,y)\mapsto\Tr_{E/\mathbb{Q}{}}(f\phi(x,y))
\]
for some totally imaginary element $f$ of $E$ and $E$-hermitian form $\phi$ on
$H_{1}(A,\mathbb{Q}{})$. If $\lambda$ can be chosen so that $\phi$ is split
(i.e., admits a totally isotropic subspace of dimension $d/2$), then $(A,\nu)$
is said to be of split Weil type.
\end{plain}

\begin{plain}
\label{r3}(Deligne 1982, \S 5.) Let $F$ be a CM-algebra, let $\phi_{1}%
,\ldots,\phi_{2p}$ be CM-types on $F$, and let $A=\prod\nolimits_{i}A_{i}$,
where $A_{i}$ is an abelian variety of CM-type $(F,\phi_{i})$. If $\sum
_{i}\phi_{i}(s)=p$ for all $s\in T\overset{\df}{=}\Hom(F,\mathbb{\mathbb{Q}{}%
}^{\mathrm{al}}{})$, then $A$, equipped with the diagonal action of $F$, is of
split Weil type. Let $I=\{1,\ldots,2p\}$ and $H^{r}(A)=H^{r}(A,\mathbb{Q}%
{}^{\mathrm{al}})$. In this case, there is a diagram
\[
\begin{tikzcd}[column sep=tiny+]
W_{F}(A)\otimes\mathbb{\mathbb{Q}}^{\mathrm{al}}\arrow[equals]{r}{\text{def}}
& \Big(\dstyle\bigwedge\nolimits_{F}^{2p}H^{1}(A,\mathbb{Q}{})\Big)\otimes_{\mathbb{Q}}\mathbb{Q}%
^{\mathrm{al}}\arrow[equals]{d}\arrow[hook]{r}
&\Big(\dstyle\bigwedge\nolimits_{\mathbb{Q}}^{2p}H^{1}(A,\mathbb{Q}{})\Big)\otimes_{\mathbb{Q}{}}\mathbb{Q}^{\mathrm{al}}
\arrow[equals]{d}\arrow[equals]{r}
&H^{2p}(A)\\
&\dstyle\bigoplus_{t\in T}\Big(\bigotimes_{i\in I}H^{1}(A_{i}%
)_{t}\Big)\arrow[hook]{r}
&\dstyle\bigoplus_{\substack{J\subset I\times T\\|J|=2p\\}}\Big(\bigotimes_{(i,t)\in J}H^{1}(A_{i})_{t}\Big)
\end{tikzcd}
\]

\end{plain}

\section{Theorem 1.}

\begin{theorem}
[\cite{andre1992}]\label{r0} Let $A$ be a complex abelian variety of CM-type.
There exist abelian varieties $A_{\Delta}$ and homomorphisms $f_{\Delta}\colon
A\rightarrow A_{\Delta}$ such that every Hodge class $t$ on $A$ can be written
as a sum $t=\sum f_{\Delta}^{\ast}(t_{\Delta})$ with $t_{\Delta}$ a Weil class
on $A_{\Delta}$.
\end{theorem}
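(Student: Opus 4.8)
The plan is to feed Pohlmann's description of the Hodge classes (\ref{r4}(c)) into the split--Weil construction (\ref{r3}), exploiting the fact that the combinatorial condition (\ref{eq2}) cutting out the Hodge lines $H^{2p}(A)_\Delta$ is \emph{identical} to the condition $\sum_i\phi_i=p$ that produces a split Weil type abelian variety. Fix $E$ and a finite Galois extension $F/\mathbb{Q}$ splitting $E$ as in \ref{r4}, put $\Gamma=\Gal(F/\mathbb{Q})$, and let $\Gamma$ act on $S=\Hom(E,F)$. By \ref{r4}(c), $B^p\otimes F=\bigoplus_\Delta H^{2p}(A)_\Delta$ with $\Delta$ ranging over the balanced subsets (those satisfying (\ref{eq2})). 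Since $\Gamma$ permutes these lines and preserves balancedness, Galois descent gives $B^p=(B^p\otimes F)^\Gamma=\bigoplus_O\big(\bigoplus_{\delta\in O}H^{2p}(A)_\delta\big)^\Gamma$, the outer sum over $\Gamma$-orbits $O$ of balanced subsets. It thus suffices, for one representative $\Delta$ of each orbit, to build $A_\Delta$, a homomorphism $f_\Delta\colon A\to A_\Delta$, and Weil classes on $A_\Delta$ whose pullbacks fill out the orbit part $\big(\bigoplus_{\delta\in\Gamma\Delta}H^{2p}(A)_\delta\big)^\Gamma$.

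\textbf{Construction of $A_\Delta$.} Given balanced $\Delta=\{s_1,\dots,s_{2p}\}\subseteq S$, set $\psi_j=\{t\in\Gamma:t\circ s_j\in\Phi\}$ for $j=1,\dots,2p$. Because $\Phi$ is a CM-type and each $s_j$ carries the conjugation of $E$ to that of $F$, one checks $\psi_j\sqcup c\psi_j=\Gamma$ (with $c$ complex conjugation), so each $\psi_j$ is a CM-type on $F$; moreover balancedness of $\Delta$ says exactly $\sum_j\psi_j(t)=|t\Delta\cap\Phi|=p$ for every $t\in\Gamma$, which is the hypothesis of \ref{r3}. Choosing an abelian variety $B_j$ of CM-type $(F,\psi_j)$ and putting $A_\Delta=\prod_j B_j$, \ref{r3} makes $A_\Delta$ of split Weil type, with $W_F(A_\Delta)$ a space of Hodge classes whose $\mathbb{Q}^{\al}$-span is $\bigoplus_{t\in\Gamma}\eta_t$, $\eta_t=\bigotimes_j H^1(B_j)_t$.

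\textbf{The maps.} For each $j$ pick a surjective $s_j(E)$-linear projection $\pi_j\colon F\to s_j(E)\cong E$. Since $\pi_j$ is $E$-linear (for the action through $s_j$), it sends the line $H^1(B_j)_t$, on which $E$ acts through $t\circ s_j$, into $H^1(A)_{t\circ s_j}$; and because $t\circ s_j\in\Phi\iff t\in\psi_j$ by the definition of $\psi_j$, the map $\pi_j$ automatically respects the Hodge filtrations and hence is the map on $H^1$ of a homomorphism $g_j\colon A\to B_j$. Set $f_\Delta=(g_1,\dots,g_{2p})\colon A\to A_\Delta$. Computing on $H^{2p}=\bigwedge^{2p}$, the Weil line $\eta_t$ pulls back into $g_1^\ast H^1(B_1)_t\wedge\cdots\wedge g_{2p}^\ast H^1(B_{2p})_t\subseteq H^1(A)_{ts_1}\wedge\cdots\wedge H^1(A)_{ts_{2p}}=H^{2p}(A)_{t\Delta}$. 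So $f_\Delta^\ast$ carries $W_F(A_\Delta)$ into the span of the Hodge lines over $\Gamma\Delta$; summing over the chosen orbit representatives then writes every Hodge class as $\sum f_\Delta^\ast(t_\Delta)$ with $t_\Delta\in W_F(A_\Delta)$ a Weil class.

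\textbf{Main obstacle.} The delicate point is the word ``fill out'': I must see that $f_\Delta^\ast$ maps $W_F(A_\Delta)$ \emph{onto}, not merely into, the orbit part. As a $\Gamma$-module $\bigoplus_{\delta\in\Gamma\Delta}H^{2p}(A)_\delta$ is induced from $\Stab_\Gamma(\Delta)$, so it has proper $\Gamma$-stable subspaces and equivariance of the image is not enough on its own; I must ensure $f_\Delta^\ast(\eta_t)\neq0$ for at least one $t$ in each coset of $\Stab_\Gamma(\Delta)$. This reduces to choosing the projections $\pi_j$ so as not to annihilate the relevant lines $H^1(B_j)_t$, a non-empty Zariski-open condition that a generic (or explicit) choice satisfies; granting it, the rank of $f_\Delta^\ast$ on $W_F(A_\Delta)$ equals $|\Gamma\Delta|=\dim_\mathbb{Q}\big(\bigoplus_{\delta\in\Gamma\Delta}H^{2p}(A)_\delta\big)^\Gamma$ and surjectivity follows.
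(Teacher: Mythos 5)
Your proposal is correct in outline and rests on the same two pillars as the paper's proof: Pohlmann's description \ref{r4}(c) of $B^p\otimes F$ by the lines $H^{2p}(A)_\Delta$ with $\Delta$ balanced, and the observation that balancedness of $\Delta$ is exactly the hypothesis $\sum_j\psi_j(t)=p$ needed to invoke \ref{r3}. Where you genuinely diverge is in the construction of $(A_\Delta,f_\Delta)$. The paper takes $A_s=A\otimes_{E,s}F$ (Serre's tensor construction), so $A_\Delta=\prod_{s\in\Delta}A_s$ comes with a \emph{canonical} homomorphism $f_\Delta\colon A\to A_\Delta$ whose effect on $H_1$ is $x\mapsto x\otimes1$; its $E$-linear dual $f_\Delta^{\ast}$ then visibly carries each Weil line $H^{2p}(A_\Delta)_{\Delta\times\{t\}}$ isomorphically onto $H^{2p}(A)_{t\circ\Delta}$, so what you call the main obstacle (non-vanishing of $f_\Delta^{\ast}$ on the Weil lines) never arises. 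You instead choose arbitrary $B_j$ of CM-type $(F,\psi_j)$ and manufacture $g_j\colon A\to B_j$ from an $E$-linear map $F\to E$ via full faithfulness of $H^1$ on abelian varieties up to isogeny; this works, but it is precisely what forces your genericity argument. Similarly, your Galois-orbit decomposition of $B^p$ and rank count is a valid but heavier substitute for the paper's device: use \emph{all} balanced $\Delta$, note each image is a $\mathbb{Q}$-subspace of $B^p$ whose $\mathbb{Q}^{\al}$-span contains $H^{2p}(A)_\Delta$, and conclude by the elementary descent footnote.

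Two points need repair before your argument is complete. First, $E$ here is a CM-\emph{algebra}, not a field (for non-simple $A$ one has $E=\prod_i\End^0(A_i)$), so your identification $s_j(E)\cong E$ is false: $s_j$ kills all simple factors but one, say $E_{i_j}$, and $\pi_j$ should be an $E_{i_j}$-linear map $F\to s_j(E)\cong E_{i_j}$ followed by the inclusion of that factor into $E$; the eigenline computation then goes through unchanged. Second, the assertion that your open condition is non-empty is the crux and is left unproved; the cleanest fix is to take $\pi_j=s_j^{-1}\circ\Tr_{F/s_j(E)}$, since over $\mathbb{Q}^{\al}$ the trace restricted to the $t$-eigenline of $F\otimes\mathbb{Q}^{\al}$ is the projection onto the corresponding coordinate and hence nonzero for \emph{every} $t$, which gives your non-vanishing at once and removes the appeal to genericity. (Minor: full faithfulness only yields $g_j\in\Hom(A,B_j)\otimes\mathbb{Q}$; replace $g_j$ by $ng_j$ to get an actual homomorphism, which is harmless since only the $\mathbb{Q}$-span of the pulled-back Weil classes matters.)
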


\begin{proof}
Let $A$ be of CM-type and let $p\in\mathbb{N}{}$. We may suppose that $A$ is a
product of simple abelian varieties $A_{i}$ and let $E=\prod_{i}\End^{0}%
(A_{i})$. Then $E$ is a CM-algebra, and $A$ is of CM-type $(E,\phi)$ for some
CM-type $\phi$ on $E$. Let $F$ be a CM subfield of $\mathbb{C}{}$, Galois over
$\mathbb{Q}$, splitting the centre of $\End^{0}(A)$. Then $F$ splits $E$. We
shall show that Theorem 1 holds with each $A_{\Delta}$ of split Weil type
relative to $F$. Let $T=\Hom(F,\mathbb{Q}{}^{\mathrm{al}})$, where
$\mathbb{Q}{}^{\mathrm{al}}$ is the algebraic closure of $\mathbb{Q}{}$ in
$\mathbb{C}{}$. As $F\subset\mathbb{Q}{}^{\mathrm{al}}$, we can identify $T$
with $\Gal(F/\mathbb{Q}{})$.

Fix a subset $\Delta$ of $S\overset{\df}{=}\Hom(E,F)$ satisfying (\ref{eq2}).
For $s\in\Delta$, let $A_{s}=A\otimes_{E,s}F$. Then $A_{s}$ is an abelian
variety of CM type $(F,\phi_{s})$, where $\phi_{s}(t)=\phi(t\circ s)$ for
$t\in T$. Because $\Delta$ satisfies (\ref{eq2}),%
\[
\sum\nolimits_{s\in\Delta}\phi_{s}(t)\overset{\df}{=}\sum\nolimits_{s\in
\Delta}\phi(t\circ s)=p\text{, all }t\in T,
\]
and so we can apply \ref{r3}: the abelian variety $A_{\Delta}\overset{\df}{=}%
\prod\nolimits_{s\in\Delta}A_{s}$ equipped with the diagonal action of $F$ is
of split Weil type. There is a homomorphism $f_{\Delta}\colon A\rightarrow
A_{\Delta}$ such that
\[
f_{\Delta\ast}\colon H_{1}(A,\mathbb{Q}{})\rightarrow H_{1}(A_{\Delta
},\mathbb{Q}{})\simeq H_{1}(A,\mathbb{Q})\otimes_{E}F^{\Delta}%
\]
is $x\mapsto x\otimes1$. Here $F^{\Delta}$ is a product of copies of $F$
indexed by $\Delta$. The map $f_{\Delta}^{\ast}\colon H^{1}(A_{\Delta
},\mathbb{Q}{})\rightarrow H^{1}(A,\mathbb{Q}{})$ is the $E$-linear dual of
$f_{\Delta\ast}$.

Note that $A_{\Delta}$ has complex multiplication by $F^{\Delta}$. According
to \ref{r4}(a),
\[
H^{2p}(A_{\Delta})\overset{\df}{=}H^{2p}(A_{\Delta},\mathbb{Q}{}^{\mathrm{al}%
})=\bigoplus\nolimits_{J}H^{2p}(A_{\Delta})_{J},\quad H^{2p}(A_{\Delta}%
)_{J}\overset{\df}{=}\bigotimes\nolimits_{(s,t)\in J}H^{1}(A_{s})_{t},
\]
where $J$ runs over the subsets of $\Delta\times T$ of size $2p$. Let
$W_{F}(A_{\Delta})\subset H^{2p}(A_{\Delta},\mathbb{Q}{})$ be the space of
Weil classes on $A_{\Delta}$. Then $W_{F}(A_{\Delta})\otimes\mathbb{Q}%
{}^{\mathrm{al}}=\bigoplus\nolimits_{t\in T}H^{2p}(A_{\Delta})_{\Delta
\times\{t\}}$. Note that $a\in E$ acts on $H^{2p}(A_{\Delta})_{\Delta
\times\{t\}}\overset{\df}{=}\bigotimes\nolimits_{s\in\Delta}H^{1}(A_{s})_{t}$
as multiplication by $\prod\nolimits_{s\in\Delta}(t\circ s)(a)$. Therefore,
$f_{\Delta}^{\ast}\otimes1\colon H^{2p}(A_{\Delta})\rightarrow H^{2p}(A)$ maps
$H^{2p}(A_{\Delta})_{\Delta\times\{t\}}$ into $H^{2p}(A)_{t\circ\Delta}\subset
B^{p}(A)\otimes\mathbb{Q}{}^{\mathrm{al}}$.

In summary: for every subset $\Delta$ of $S$ satisfying (\ref{eq2}), we have a
homomorphism\linebreak$f_{\Delta}\colon A\rightarrow A_{\Delta}$ from $A$ into
an abelian variety $A_{\Delta}$ of split Weil type relative to $F$; moreover,
$f_{\Delta}^{\ast}(W_{F}(A_{\Delta}))\otimes\mathbb{Q}^{\mathrm{al}}$ is
contained in $B^{p}(A)\otimes\mathbb{Q}{}^{\mathrm{al}}$ and contains
$H^{2p}(A)_{\Delta}$. As the subspaces $H^{2p}(A)_{\Delta}$ span $B^{p}%
\otimes\mathbb{Q}{}^{\mathrm{al}}$ (see \ref{r4}(c)), this implies that the
subspaces $f_{\Delta}^{\ast}(W_{F}(A_{\Delta}))$ span $B^{p}$.\footnote{Let
$W$ and $W^{\prime}$ be subspaces of a $k$-vector space $V$, and let $K$ be a
field containing $k$. If $W\otimes_{k}K\subset W^{\prime}\otimes_{k}K$, then
$W\subset W^{\prime}$. Indeed, if $W$ is not contained in $W^{\prime}$, then
$(W+W^{\prime})/W^{\prime}\neq0$; but then $(W\otimes K+W^{\prime}\otimes
K)/W\otimes K\neq0$, which implies that $W\otimes K$ is not contained in
$W^{\prime}\otimes K$.}
\end{proof}

\section{Deligne's original version of Theorem 1.}

Let $E$ be a CM-field Galois over $\mathbb{Q}{}$, and let $\mathcal{S}{}$ be
the set of CM-types on $E$. For each $\Phi\in\mathcal{S}{}$ choose an abelian
variety $A_{\Phi}$ of CM-type $(E,\Phi)$, and let $A_{\mathcal{S}{}}%
=\prod\nolimits_{\Phi\in\mathcal{S}{}}A_{\Phi}$. Define $G^{H}$ (resp. $G^{W}%
$) to be the algebraic subgroup of $\GL_{H^{1}(A_{\mathcal{S}{}},\mathbb{Q}%
{})}$ fixing all Hodge classes (resp. divisor classes and split Weil classes)
on all products of powers of the $A_{\Phi}$, $\Phi\in\mathcal{S}{}$.

\begin{theorem}
\label{t2}The algebraic groups $G^{H}$ and $G^{W}$ are equal.
\end{theorem}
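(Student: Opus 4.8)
The plan is to prove the two inclusions separately, the first being immediate and the second resting on Theorem~\ref{r0}. For $G^{H}\subseteq G^{W}$, I would observe that divisor classes are Hodge classes (they are algebraic, hence of type $(1,1)$) and that split Weil classes are Hodge classes by \ref{r2} (Deligne 1982, 4.4). Since $G^{H}$ fixes every Hodge class on every product of powers of the $A_{\Phi}$, it fixes in particular all divisor classes and all split Weil classes, so $G^{H}$ is contained in the common stabilizer of these classes, namely $G^{W}$.

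For the reverse inclusion, the key reduction is that $G^{H}$ is by definition the full stabilizer in $\GL_{H^{1}(A_{\mathcal{S}{}},\mathbb{Q}{})}$ of the collection of all Hodge classes; hence any subgroup that fixes every Hodge class on every product of powers of the $A_{\Phi}$ is automatically contained in $G^{H}$. Thus it suffices to show that $G^{W}$ fixes every Hodge class. Fix a product of powers $B$ of the $A_{\Phi}$ and a Hodge class $t$ on $B$. Because $E$ is Galois over $\mathbb{Q}{}$ it is its own splitting field, so Theorem~\ref{r0} applies with $F=E$: we may write $t=\sum_{\Delta}f_{\Delta}^{\ast}(t_{\Delta})$, where each $A_{\Delta}=\prod_{s\in\Delta}A_{s}$ is of split Weil type relative to $E$, $t_{\Delta}\in W_{E}(A_{\Delta})$, and $f_{\Delta}\colon B\to A_{\Delta}$ is a homomorphism.

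The two points that make $G^{W}$ fix each summand $f_{\Delta}^{\ast}(t_{\Delta})$ are the following. First, because $E$ is Galois, each factor $A_{s}=B\otimes_{E,s}E$ is of CM-type $(E,\phi_{s})$ for a CM-type $\phi_{s}\in\mathcal{S}{}$, so $A_{s}$ is isogenous to $A_{\phi_{s}}$ and $A_{\Delta}$ is isogenous, by an $E$-equivariant isogeny, to a product of the chosen $A_{\Phi}$; under this identification the Weil class $t_{\Delta}$ becomes a split Weil class on a product of powers of the $A_{\Phi}$, hence is fixed by $G^{W}$. Second, $G^{W}$ fixes all divisor classes, and I would use this to show that $G^{W}$ commutes with every homomorphism between products of powers of the $A_{\Phi}$: a homomorphism $A_{\Phi}\to A_{\Psi}$ determines, via its action on $H^{1}$, a class in $H^{1}(A_{\Psi})^{\ast}\otimes H^{1}(A_{\Phi})$ lying in the span of divisor classes on $A_{\Psi}\times A_{\Phi}$ (using the decomposition of $\mathrm{NS}(A_{\Psi}\times A_{\Phi})$ together with a polarization, itself a divisor class). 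Fixing this class is precisely the statement that the induced map on cohomology is $G^{W}$-equivariant. Consequently $f_{\Delta}^{\ast}$ is $G^{W}$-equivariant, so $G^{W}$ fixes $f_{\Delta}^{\ast}(t_{\Delta})$ and therefore fixes $t$. This gives $G^{W}\subseteq G^{H}$ and completes the proof.

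I expect the second point, the equivariance of the pullback maps, to be the main obstacle, since it is where the divisor classes (as opposed to the split Weil classes alone) are genuinely needed: without knowing that $G^{W}$ commutes with the homomorphisms $f_{\Delta}$, one cannot transport the invariance of the split Weil classes $t_{\Delta}$ back to invariance of the Hodge classes $f_{\Delta}^{\ast}(t_{\Delta})$ on $B$. The identification of $\Hom^{0}(A_{\Phi},A_{\Psi})$ with divisor classes on the product, and the verification that the Weil classes of $A_{\Delta}$ really do match split Weil classes on a product of the $A_{\Phi}$ under the Galois hypothesis, are the technical steps requiring care.
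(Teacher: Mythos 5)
Your proposal is correct and takes essentially the same route as the paper: the easy inclusion $G^{H}\subset G^{W}$ because divisor and split Weil classes are Hodge, and the reverse inclusion by combining Theorem~\ref{r0} with the fact that homomorphisms of abelian varieties are encoded by divisor classes (the paper simply cites \cite{milne1999lc}, 5.6 for this), so that $G^{W}$ fixes the classes $f_{\Delta}^{\ast}(W_{F}(A_{\Delta}))$, which span the Hodge classes. The two points you single out --- the $E$-equivariant isogeny identifying $A_{\Delta}$ with a product of powers of the $A_{\Phi}$, and the $G^{W}$-equivariance of the pullback maps --- are exactly what the paper's terse statement leaves implicit, so your write-up is a correct expansion of the same argument rather than a different one.
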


\begin{proof}
As divisor classes and Weil classes are Hodge classes, certainly $G^{H}\subset
G^{W}$. On the other hand, the graphs of homomorphisms of abelian varieties
are in the $\mathbb{Q}{}$-algebra generated by divisor classes
(\cite{milne1999lc}, 5.6), and so, with the notation of Theorem 1, $G^{W}$
fixes the elements of $f_{\Delta}^{\ast}(W_{F}(A_{\Delta}))$. As these span
the Hodge classes, we deduce that $G^{W}\subset G^{H}$.
\end{proof}

\begin{remark}
\label{r7}Theorem \ref{t2} is Deligne's original theorem (1982, \S 5) except
that, instead of requiring $G^{W}$ to fix all divisor classes, he requires it
to fix certain specific homomorphisms.
\end{remark}

\section{Applications}

\begin{plain}
\label{r10}Call a rational cohomology class $c$ on a smooth projective complex
variety $X$ \emph{accessible }if it belongs to the smallest family of rational
cohomology classes such that,

\begin{enumerate}
\item the cohomology class of every algebraic cycle is accessible;

\item the pull-back by a map of varieties of an accessible class is accessible;

\item if $(X_{s})_{s\in S}$ is an algebraic family of smooth projective
varieties with $S$ connected and smooth and $(t_{s})_{s\in S}$ is a family of
rational classes (i.e., a global section of $R^{r}f_{\ast}\mathbb{Q}{}\ldots)$
such that $t_{s}$ is accessible for one $s$, then $t_{s}$ is accessible for
all $s$.
\end{enumerate}

\noindent Accessible classes are automatically Hodge, even absolutely Hodge
(\cite{deligne1982}, \S \S 2,3).
\end{plain}

\begin{theorem}
\label{t3}For abelian varieties, every Hodge class is accessible.
\end{theorem}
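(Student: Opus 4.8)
The plan is to prove the theorem by three reductions, governed by the three clauses (a), (b), (c) of the definition of accessibility in \ref{r10}. Because the cohomology ring of an abelian variety is generated by $H^{1}$, every Hodge class lies in some exterior power of $H^{1}$; I would first deform an arbitrary abelian variety to one of CM-type (clause (c)), then use Theorem \ref{r0} to express a Hodge class on a CM abelian variety through split Weil classes (clause (b)), and finally prove that split Weil classes are accessible by degenerating them to algebraic cycles (clauses (a) and (c)). The heart of the matter, as in \cite{deligne1982}, \S5, is the last step.

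\emph{Reduction to CM-type.} Let $t$ be a Hodge class on an abelian variety $A$ and let $G=\MT(A)\subset\GL_{H^{1}(A,\mathbb{Q})}$ be the Mumford--Tate group, so that $t$ is a $G$-invariant. Each point $h'$ of a connected component $X^{+}$ of the Mumford--Tate domain $X=G(\mathbb{R})/K$ is $G(\mathbb{R})$-conjugate to the Hodge structure of $A$; it is therefore polarizable and still has $t$ among its Hodge classes. After adding a level structure, the $h'\in X^{+}$ are the fibres of the universal abelian scheme over a connected component $S=\Gamma\backslash X^{+}$ of a Shimura variety of Hodge type, and $t$ extends to a flat section which is a Hodge class on every fibre. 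Since the special points are dense in $S$, the variety $A$ and some abelian variety of CM-type are both fibres over the connected smooth base $S$; by clause (c) it suffices to treat the CM case.

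\emph{Reduction to split Weil classes and their accessibility.} Let now $A$ be of CM-type. By Theorem \ref{r0} there are homomorphisms $f_{\Delta}\colon A\to A_{\Delta}$ with each $A_{\Delta}$ of split Weil type and split Weil classes $t_{\Delta}\in W_{F}(A_{\Delta})$ such that $t=\sum_{\Delta}f_{\Delta}^{\ast}(t_{\Delta})$. Each $f_{\Delta}^{\ast}(t_{\Delta})$ is accessible once $t_{\Delta}$ is (clause (b)), and, using that the accessible classes form a $\mathbb{Q}$-subspace of $H^{2p}(A,\mathbb{Q})$, so is their sum $t$; thus all comes down to a single split Weil class. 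For this I would use \ref{r3}: the abelian varieties of split Weil type for the CM-field $F$ of a fixed rank are the fibres of a connected Shimura variety attached to the similitude group $\GU(\phi)$ of the Hermitian form (connected because all split $\phi$ of that rank are isomorphic), and $W_{F}=\bigwedge_{F}^{d}H^{1}$, being fixed by the derived group, forms a flat family of Hodge classes over it. Decomposing the split form $\phi$ into hyperbolic planes, one finds a fibre at which the abelian variety is isogenous to a product and $\bigwedge_{F}^{d}H^{1}$ is represented by a product of divisor classes, hence by an algebraic cycle; that class is accessible by clause (a), so $t_{\Delta}$ is accessible by clause (c).

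I expect the real obstacle to be this last step: exhibiting, for a split Hermitian form of arbitrary rank over an arbitrary CM-field, the degenerate fibre on which $\bigwedge_{F}^{d}H^{1}$ becomes an explicit product of divisor classes, and verifying that it lies in the same connected family as $A_{\Delta}$ with $W_{F}$ remaining Hodge throughout. By contrast the reduction to CM-type is standard once one grants the density of special points and the algebraicity of the Mumford--Tate family, and the middle step is formal given Theorem \ref{r0} and the subspace property of accessible classes.
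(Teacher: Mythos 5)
Your proposal is correct and takes essentially the same route as the paper: reduce the general case to the CM case by placing $A$ in a family over a (Shimura) variety and invoking clause (c) of \ref{r10}, handle the CM case by Theorem \ref{r0} together with clause (b), and prove split Weil classes accessible by degenerating within a connected family to a fibre where they become algebraic, using clauses (a) and (c). The only difference is packaging: the paper compresses your first and third steps into citations of \cite{deligne1982} (its \S 6 reduction and its 4.8), so your sketch is in effect an unpacking of those references around the same key new ingredient, Theorem \ref{r0}.
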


\begin{proof}
This is proved in \cite{deligne1982} (see its Introduction) except that the
statement there includes an extra \textquotedblleft
tannakian\textquotedblright\ condition on the accessible classes (ibid.,
p.~10, (c)). However, this condition is used only in the proof that Hodge
classes on CM abelian varieties are accessible. Conditions (a) and (c) of
\ref{r10} imply that split Weil classes are accessible (ibid., 4.8), and so
this follows from Theorem 1 (using \ref{r10}(b)).
\end{proof}

\begin{remark}
\label{r5}In particular, we see that the Hodge conjecture holds for abelian
varieties if algebraic classes satisfy the variational Hodge conjecture (i.e.,
condition \ref{r10}(c)).
\end{remark}

\begin{remark}
\label{r6}For Theorem 3, it suffices to assume that \ref{r10}(c) holds for
families of abelian varieties over a \emph{complete smooth curve }$S$. Indeed,
(c) is used in the proof of the theorem only for families of abelian varieties
$(A_{s})_{s\in\mathbb{S}}$ with additional structure over a locally symmetric
variety $S$. More precisely, there is a semisimple algebraic group $G$ over
$\mathbb{Q}{}$, a bounded symmetric domain $X$ on which $G(\mathbb{R}{})$ acts
transitively with finite kernel, and a congruence subgroup $\Gamma\subset
G(\mathbb{Q}{})$ such that $S(\mathbb{C}{})=\Gamma\backslash X$
(\cite{deligne1982}, proofs of 4.8, 6.1). For $s\in S(\mathbb{C}{})$, the
points $s^{\prime}$ of the orbit $G(\mathbb{Q}{})\cdot s$ are dense in $S$ and
each abelian variety $A_{s^{\prime}}$ is isogenous to $A_{s}$. The boundary of
$S$ in its minimal (Baily-Borel) compactification has codimension $\geq2$.
After Bertini, for any pair of points $s_{1},s_{2}\in S(\mathbb{C}{})$, we can
find a smooth linear section of $S$ meeting both orbits $G(\mathbb{Q}{})\cdot
s_{1}$ and $G(\mathbb{Q}{})\cdot s_{2}$ but not meeting the boundary. This
proves what we want. Cf. \cite{andre1996}, p.~32.
\end{remark}

Let $X$ be an algebraic variety of dimension $d$, and let $L\colon H^{\ast
}(X,\mathbb{Q}{})\rightarrow H^{\ast+2}(X,\mathbb{Q}{})$ be the Lefschetz
operator defined by a hyperplane section of $X$. The strong Lefschetz theorem
says that $L^{d-i}\colon H^{i}(X,\mathbb{Q})\rightarrow H^{2d-i}%
(X,\mathbb{Q}{})$ is an isomorphism for all $i\leq d$. Let $a\!H^{2i}%
(X,\mathbb{Q}{})$ denote the $\mathbb{Q}$-subspace of $H^{2i}(X,\mathbb{Q}{})$
spanned by the algebraic classes. Then $L^{d-2i}$ induces an injective map
$L^{d-2i}\colon a\!H^{2i}(X,\mathbb{Q})\rightarrow a\!H^{2d-2i}(X,\mathbb{Q}%
{})$. The standard conjecture of Lefschetz type asserts that this map is
surjective for all $i\leq d$. It is known to be true for abelian varieties.

\begin{proposition}
[\cite{abdulali1994}, \textnf{p.~1122}]\label{p1}Let $f\colon A\rightarrow S$
be an abelian scheme over a smooth complete complex variety $S$. Assume that
the Lefschetz standard conjecture holds for $A$. Let $t$ be a global section
of the sheaf $R^{2r}f_{\ast}\mathbb{Q}(r)$; if $t_{s}\in H^{2r}(A_{s}%
,\mathbb{Q}{}(r))$ is algebraic for one $s\in S(\mathbb{C}{})$, then it is
algebraic for all $s$.
\end{proposition}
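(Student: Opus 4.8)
The plan is to exploit the standard Lefschetz conjecture to produce, out of the algebraic class at the single point $s$, a genuine algebraic correspondence that realizes the projector onto the rational subspace spanned by $t$, and then to propagate algebraicity by the rigidity of such correspondences in a family. First I would interpret the global section $t$ of $R^{2r}f_{\ast}\mathbb{Q}(r)$ as a flat family of Hodge classes $t_s\in H^{2r}(A_s,\mathbb{Q}(r))$: since $S$ is connected and smooth and the $t_s$ form a global section of a local system, the property of being a Hodge class is preserved (this is exactly the content of the variational Hodge conjecture's \emph{hypothesis}, here supplied for free because the section is flat), so each $t_s$ is at least Hodge. The whole difficulty is to upgrade ``Hodge'' to ``algebraic'' once we know it at one fibre.

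The key idea is that the Lefschetz standard conjecture for $A$ (the total space, an abelian scheme, hence itself an abelian variety up to the base) gives us the \emph{algebraicity of the Künneth projectors and of the inverse Lefschetz operator} $\Lambda$, and therefore lets us manufacture algebraic cycle classes on self-products. Concretely, I would form the relative self-product $A\times_S A$ and try to build, from $t$, a relative algebraic correspondence whose restriction to the fibre over $s$ induces multiplication by the algebraic class $t_s$. The mechanism is Abdulali's: using the strong Lefschetz isomorphism together with the standard conjecture, one shows that the subspace of $H^{2r}(A,\mathbb{Q}(r))$ spanned by algebraic classes is cut out by algebraic correspondences, and that the parallel-transport (Gauss--Manin connection) of the flat section $t$ stays inside the image of the algebraic projector. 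The crucial step is thus to check that the projector onto the span of the flat family $(t_s)_s$ is itself represented by an algebraic cycle on $A\times_S A$ relative to $S$; granting the Lefschetz conjecture, this projector is a polynomial in $L$ and $\Lambda$ applied to Künneth components, all of which are algebraic.

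Once the relative correspondence $Z\subset A\times_S A$ is algebraic and its fibrewise action at $s$ picks out the algebraic class $t_s$, I would argue that its action at \emph{every} fibre $s'$ produces an algebraic class: restriction of an algebraic cycle on $A\times_S A$ to a fibre $A_{s'}\times A_{s'}$ is algebraic, and the induced correspondence sends the (algebraic) fundamental class of $A_{s'}$, or an appropriate algebraic input built from the polarization, to a class that must equal $t_{s'}$ by flatness (both are global flat sections agreeing at $s$, hence everywhere). Therefore $t_{s'}$ is algebraic for all $s'\in S(\mathbb{C})$.

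The main obstacle, and the place where the argument is genuinely delicate rather than formal, is the second step: producing the \emph{relative} algebraic correspondence over $S$ whose fibrewise value recovers $t_s$. The Lefschetz conjecture as stated is a statement about a single projective variety, giving algebraicity of $\Lambda$ on $A$, but I must transfer this into an algebraic family of projectors that is compatible with the Gauss--Manin connection so that it interacts correctly with the flat section $t$. Controlling this compatibility—ensuring that the algebraic cycle constructed on the total space $A\times_S A$ restricts fibrewise to the operator whose image contains $t_{s'}$ for all $s'$, and not merely at the base point—is the heart of Abdulali's argument and the step I expect to require the most care.
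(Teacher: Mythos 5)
There is a genuine gap, and it sits exactly where you flag it. Your construction hinges on the claim that, granting the Lefschetz conjecture, the projector onto the span of the flat family $(t_s)_s$ ``is a polynomial in $L$ and $\Lambda$ applied to K\"unneth components, all of which are algebraic.'' This is false: $L$, $\Lambda$ and the K\"unneth projectors are universal operators attached to the polarization and the diagonal; any polynomial in them preserves the Lefschetz decomposition and commutes with every isometry of the polarized Hodge structure, so its image is a canonically defined sum of Lefschetz components --- never the line spanned by one particular class $t_s$ (unless that component happens to be a line). What conjecture $B$ actually gives you fibrewise is subtler: combined with the Hodge index theorem it makes the intersection pairing nondegenerate on algebraic classes, so one can produce an algebraic dual class $t_s^{\vee}$ and form the algebraic projector $t_s^{\vee}\otimes t_s$ on $A_s\times A_s$. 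But that cycle lives only on the fibre, and the second, fatal, gap is precisely the step you defer: extending it to a \emph{relative} algebraic cycle on $A\times_S A$ is the problem of spreading algebraicity from one fibre to the whole family --- i.e., it is the statement being proved. No mechanism is offered for this, and the Lefschetz conjecture alone supplies none; the argument as proposed is circular. (A smaller point: your first paragraph's ``Hodge at one $s$ implies Hodge at all $s$'' is not free from flatness; it needs Deligne's theorem of the fixed part. And nothing in your argument visibly uses completeness of $S$, which is essential.)

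For contrast, the paper's proof (Abdulali's) needs no relative correspondences at all; it is a dimension count. Multiplication by $n$ on the fibres degenerates the Leray spectral sequence, so $H^{0}(S,R^{2r}f_{\ast}\mathbb{Q})$ sits inside $H^{2r}(A,\mathbb{Q})$; with $\pi=\pi_{1}(S,s)$, restriction $j_s^{\ast}$ identifies it with $H^{2r}(A_s,\mathbb{Q})^{\pi}$ and preserves algebraic classes, while the Gysin map $j_{s\ast}$ goes the other way in complementary degree, also preserving algebraic classes. Conjecture $B$ for the fibre (known for abelian varieties) equates $\dim a\!H^{2r}(A_s,\mathbb{Q})^{\pi}$ with $\dim a\!H^{2d-2r}(A_s,\mathbb{Q})^{\pi}$, and conjecture $B$ for the total space $A$ (smooth and projective because $S$ is complete --- this is where completeness enters) closes the resulting chain of inequalities, forcing them all to be equalities. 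Hence $a\!H^{2r}(A_s,\mathbb{Q})^{\pi}=j_s^{\ast}\bigl(a\!H^{0}(S,R^{2r}f_{\ast}\mathbb{Q})\bigr)$: an algebraic $t_s$ is the restriction of a single algebraic class on the total space $A$, and restricting that same class at any other point shows $t_{s'}$ is algebraic for every $s'$. The lesson, if you want to repair your approach, is that the only ``spreading device'' needed is an algebraic class on the total space, not a relative projector.
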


\begin{proof}
For $n\in\mathbb{N}{}$, let $\theta_{n}$ denote the endomorphism of $A/S$
acting as multiplication by $n$ on the fibres. By a standard argument
(\cite{kleiman1968}, p.~374), $\theta_{n}^{\ast}$ acts as $n^{j}$ on
$R^{j}f_{\ast}\mathbb{Q}{}$. As $\theta_{n}^{\ast}$ commutes with the
differentials $d_{2}$ of the Leray spectral sequence $H^{i}(S,R^{j}f_{\ast
}\mathbb{Q}{})\implies H^{i+j}(A,\mathbb{Q}{})$, we see that the spectral
sequence degenerates at the $E_{2}$-term and
\[
H^{r}(A,\mathbb{Q}{})\simeq\bigoplus\nolimits_{i+j=r}H^{i}(S,R^{j}f_{\ast
}\mathbb{Q}{})
\]
with $H^{i}(S,R^{j}f_{\ast}\mathbb{Q}{})$ the subspace of $H^{i+j}%
(A,\mathbb{Q}{})$ on which $\theta_{n}$ acts as $n^{j}$. Let $s\in
S(\mathbb{C}{})$ and $\pi=\pi_{1}(S,s)$. The inclusion $j_{s}\colon
A_{s}\hookrightarrow A$ induces an isomorphism $j_{s}^{\ast}\colon
H^{0}(S,R^{2r}f_{\ast}\mathbb{Q}{})\hookrightarrow H^{2r}(A_{s},\mathbb{Q}%
{})^{\pi}$ preserving algebraic classes, and so%
\begin{equation}
\dim a\!H^{0}(S,R^{2r}f_{\ast}\mathbb{Q}{})\leq\dim a\!H^{2r}(A_{s}%
,\mathbb{Q}{})^{\pi}. \label{e2}%
\end{equation}
Similarly, the Gysin map $j_{s\ast}\colon H^{2d-2r}(A_{s},\mathbb{Q}%
{})\rightarrow{}H^{2d-2r+2m}(A,\mathbb{Q}{})$, where $m=\dim(S)$ and
$d=\dim(A/S)$, induces a map $H^{2d-2r}(A_{s},\mathbb{Q}{})^{\pi}\rightarrow
H^{2m}(S,R^{2d-2r}f_{\ast}\mathbb{Q})$ preserving algebraic classes, and so%
\begin{equation}
\dim a\!H^{2d-2r}(A_{s},\mathbb{Q}{})^{\pi}\leq\dim a\!H^{2m}(S,R^{2d-2r}%
f_{\ast}\mathbb{Q})\text{.} \label{e3}%
\end{equation}
Because the Lefschetz standard conjecture holds for $A_{s}$,%
\begin{equation}
\dim a\!H^{2r}(A_{s},\mathbb{Q}{})^{\pi}=\dim a\!H^{2d-2r}(A_{s},\mathbb{Q}%
{})^{\pi}\text{.} \label{e4}%
\end{equation}
Hence,
\begin{align*}
\dim a\!H^{0}(S,R^{2r}f_{\ast}\mathbb{Q}{})  &  \overset{(\text{\ref{e2}%
})}{\leq}\dim a\!H^{2r}(A_{s}{},\mathbb{Q}{})^{\pi}\overset{(\text{\ref{e4}%
)}}{=}\dim a\!H^{2d-2r}(A_{s},\mathbb{Q}{})^{\pi}\\
&  \overset{(\text{\ref{e3}})}{\leq}\dim a\!H^{2m}(S,R^{2d-2r}f_{\ast
}\mathbb{Q})\text{.}%
\end{align*}
The Lefschetz standard conjecture for $A$ implies that
\[
\dim a\!H^{0}(S,R^{2r}f_{\ast}\mathbb{Q}{})=\dim a\!H^{2m}(S,R^{2d-2r}f_{\ast
}\mathbb{Q}),
\]
and so the inequalities are equalities. Thus%
\[
a\!H^{2r}(A_{s},\mathbb{Q}{})^{\pi}=a\!H^{0}(S,R^{2r}f_{\ast}\mathbb{Q}),
\]
which is independent of $s$.
\end{proof}

\begin{theorem}
[Abdulali, Andr\'{e}]\label{t4}The Lefschetz standard conjecture for algebraic
varieties over $\mathbb{C}$ implies the Hodge conjecture for abelian varieties.
\end{theorem}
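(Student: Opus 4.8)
The plan is to deduce the theorem by combining the accessibility of Hodge classes (Theorem \ref{t3}) with the propagation of algebraicity in families (Proposition \ref{p1}). Assume the Lefschetz standard conjecture for every smooth projective variety over $\mathbb{C}$. We must show that each Hodge class on an abelian variety is algebraic, i.e.\ lies in the span of classes of algebraic cycles. By Theorem \ref{t3} every such Hodge class is accessible, and the accessible classes are by definition the \emph{smallest} family of rational cohomology classes closed under operations \ref{r10}(a), (b), (c). Hence it suffices to check that the family of algebraic classes is itself closed under (a), (b), (c): the minimality of the accessible family then forces every accessible class --- in particular every Hodge class on an abelian variety --- to be algebraic.

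For algebraic classes, conditions (a) and (b) hold trivially: a cycle class is algebraic by definition, and the pullback of an algebraic class along a morphism of smooth projective varieties is again represented by an algebraic cycle. The substance lies in condition (c), the spreading-out of algebraicity over a smooth connected base, and this is precisely the content of Proposition \ref{p1}. There, for an abelian scheme $f\colon A\to S$ over a smooth \emph{complete} base, a global section of $R^{2r}f_*\mathbb{Q}(r)$ that is algebraic on one fibre is algebraic on every fibre, under the Lefschetz standard conjecture for the total space $A$; and $A$ is a smooth projective variety, so this conjecture is available by hypothesis. Note that $A$ is not itself an abelian variety, which is exactly why the theorem must assume the Lefschetz conjecture beyond the abelian case.

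The main obstacle is the clash between the completeness hypothesis of Proposition \ref{p1} and the definition \ref{r10}(c), which permits an arbitrary smooth connected base $S$; Proposition \ref{p1} is proved using the Gysin map and the strong Lefschetz theorem on $A$, and so genuinely needs $S$ (hence $A$) complete and projective. I would circumvent this using Remark \ref{r6}: the proof of Theorem \ref{t3} invokes (c) only for families of abelian varieties over a complete smooth curve. Accordingly I would replace (c) throughout by its restriction (c$'$) to such complete families and work with the smallest family closed under (a), (b), (c$'$); this family still contains every Hodge class on an abelian variety by Theorem \ref{t3} and Remark \ref{r6}, while Proposition \ref{p1} now applies verbatim to show that algebraic classes satisfy (c$'$). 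Minimality then yields the inclusion of this family in the algebraic classes, proving that every Hodge class on an abelian variety is algebraic.
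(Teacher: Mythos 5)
Your proof is correct and takes essentially the same route as the paper's: both deduce the theorem from Theorem \ref{t3} by showing that algebraic classes satisfy conditions (a) and (b) of \ref{r10} together with condition (c) restricted to abelian schemes over complete smooth curves (Remark \ref{r6}), the latter being exactly what Proposition \ref{p1} supplies under the Lefschetz hypothesis. Your write-up simply makes explicit the minimality argument and the replacement of (c) by its restricted form (c$'$), which the paper's terser proof leaves implicit.
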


\begin{proof}
By Theorem \ref{t3}, it suffices to show that algebraic classes are
accessible. They obviously satisfy conditions (a) and (b) of \ref{r10}, and it
suffices to check (c) with $S$ a complete smooth curve (Remark \ref{r6}). This
Proposition \ref{p1} does.
\end{proof}

\begin{remark}
\label{r8}Proposition 1 applies also to absolute Hodge classes and motivated
classes. As these satisfy the Lefschetz standard conjecture, we deduce that
Hodge classes on abelian varieties are absolutely Hodge and motivated.
\end{remark}

\begin{remark}
\label{r9}Let $H_{B}$ denote the Betti cohomology theory and $\Mot_{H}%
(\mathbb{C}{})$ the category of motives over $\mathbb{C}{}$ for homological
equivalence generated by the algebraic varieties for which the K\"{u}nneth
projectors are algebraic. If the Betti fibre functor $\omega_{B}$ on
$\Mot_{H}(\mathbb{C})$ is conservative, then the Lefschetz standard conjecture
holds${}$ for the varieties in question.

Indeed, as $L^{d-2i}\colon H_{B}^{2i}(X)(i)\rightarrow H_{B}^{2d-2i}%
(X,\mathbb{Q}{})(d-i)$ is an isomorphism (strong Lefschetz theorem), so also
is $l^{d-2i}\colon h^{2i}(X)(i)\rightarrow h^{2d-2i}(X)(d-i)$ by our
assumption on $\omega_{B}$. When we apply the functor $\Hom(\1,-)$ to this
last isomorphism, it becomes $L^{d-2i}\colon a\!H_{B}^{2i}(X)(i)\rightarrow
a\!H_{B}^{2d-2i}(X)(d-i)$, which is therefore an isomorphism. Thus the
standard conjecture $A(X,L)$ is true, and $A(X\times X,L\otimes1+1\otimes L)$
implies $B(X).$
\end{remark}

\bibliographystyle{cbe}
\bibliography{D:/Current/refs}
\end{document}